\newcommand{\real}{\mathop{\mathrm{Re}}}
\newcommand{\supp}{\mathop{\mathrm{supp}}}
\newtheorem{theorem}{Theorem}[section]
\newtheorem{lemma}[theorem]{Lemma}
\newtheorem{remark}[theorem]{Remark}
\newtheorem{proposition}[theorem]{Proposition}
\numberwithin{equation}{section}
\title{Increasing stability in an inverse problem for the acoustic equation}
\author{Sei Nagayasu\thanks{Department of Mathematical Science,
Graduate School of Material Science,
University of Hyogo, 2167 Shosha, Himeji, Hyogo 671-2280, Japan.
Email:sei@sci.u-hyogo.ac.jp} \qquad Gunther
Uhlmann\thanks{Department of Mathematics, University of Washington,
Box 354305, Seattle, WA 98195-4350 and Department of Mathematics,
University of California, Irvine, CA 92697-3875, USA.
Email:gunther@math.washington.edu}\qquad Jenn-Nan
Wang\thanks{Department of Mathematics, NCTS (Taipei), National
Taiwan University, Taipei 106, Taiwan.
Email:jnwang@math.ntu.edu.tw}}
\date{}
\begin{document}
\maketitle

\begin{abstract}
In this work we study the inverse boundary value problem of
determining the refractive index in the acoustic equation. It is
known that this inverse problem is ill-posed. Nonetheless,  we
show that the ill-posedness decreases when we increase the frequency
and the stability estimate changes from logarithmic type for low
frequencies to a Lipschitz estimate for large frequencies.
\end{abstract}

\section{Introduction}\label{sec1}
\setcounter{equation}{0}

In this paper we study the issue of stability for determining the
refractive index in the acoustic equation by boundary measurements.
It is well known that this inverse problem is ill-posed. However,
one anticipates that the stability will increase if one increases
the frequency. This phenomenon was observed numerically in the
inverse obstacle scattering problem \cite{CHP}. Several rigorous
justifications of the increasing stability phenomena in different
settings were obtained by Isakov {\it et al} \cite{HI, I07, I11,
ASI07, ASI10}. Especially, in \cite{I11}, Isakov considered the
Helmholtz equation with a potential
\begin{equation}\label{eq0}
-\Delta u-k^2u+qu=0\quad\text{in}\quad\Omega.
\end{equation}
He obtained stability estimates of determining $q$ by the
Dirichlet-to-Neumann map for different ranges of $k$'s. All of these results
demonstrate the increasing stability phenomena in $k$.
For the case of the inverse source problem for Helmholtz equation and an homogeneous background it was shown in \cite{BLT} that the ill-posedness of the inverse problem decreases as the frequency increases.

In this paper, we study the acoustic wave equation. Let $\Omega
\subset \mathbb{R}^{n}$ be a bounded domain, where $n\ge 3$. Let
$\partial \Omega$ be smooth. We consider the equation
\begin{equation}\label{eq:eq}
\bigl( \Delta + k^{2} q(x) \bigr) u(x) = 0\quad \mbox{ in }\quad
\Omega,
\end{equation}
where the real-valued $q(x)$ is the refractive index.
Assume that the
kernel of the operator $\Delta+k^2 q(x)$ on $H_0^1(\Omega)$ is
trivial. Associated with \eqref{eq:eq}, we define the
Dirichlet-to-Neumann map (DN map) $\Lambda:
H^{1/2}(\partial\Omega)\to H^{-1/2}(\partial\Omega)$ by
\[
\Lambda f =
\frac{\partial u}{\partial \nu} \biggr|_{\partial \Omega} ,
\]
where $u$ is the solution to (\ref{eq:eq}) with the Dirichlet
condition $u = f$ on $\partial \Omega$, and $\nu$ is the unit outer
normal vector of $\partial \Omega$. The uniqueness of this inverse
problem is well known \cite{SU}. This inverse problem is notoriously
ill-posed. For this aspect, Alessandrini proved that the stability
estimate for this problem is of log type \cite{A88} and Mandache showed
that the log type stability is optimal \cite{Ma}. In this paper, we
would like to focus on how the stability behaves when the frequency
$k$ increases. Now we state the main result.
\begin{theorem}\label{thm}
Assume that $q_1(x)$ and $q_2(x)$ are two sound speeds with
associated DN maps $\Lambda_1$ and $\Lambda_2$, respectively. Let $s
> (n/2) + 1$, $M > 0$. Suppose $\lVert q_{l} \rVert_{H^{s}
( \Omega )} \leq M$ $(l = 1, 2)$ and $\supp ( q_{1} - q_{2} )
\subset \Omega$. Denote $\widetilde{q}$ a zero extension of $q_{1} -
q_{2}$. Then there exists a constant $C_1$, depending only on $n$,
$s$, and $\Omega$, such that if $k^{2} \geq 1/(C_1M)$ and $\lVert
\Lambda_{1} - \Lambda_{2} \rVert_{\ast} \leq 1 / e$ then
\begin{equation}\label{iest}
\lVert \widetilde{q} \rVert_{H^{-s} ( \mathbb{R}^{n} )}
\leq \frac{C}{k^{2}} \exp ( C k^{2} )
\lVert \Lambda_{1} - \Lambda_{2} \rVert_{\ast}
+ C \left(
 k^{2}
 + \log \frac{1}{\lVert \Lambda_{1} - \Lambda_{2} \rVert_{\ast}}
\right)^{- ( 2 s - n )}
\end{equation}
holds, where $C>0$ depends only on $n, s, \Omega , M$
and $\supp ( q_{1} - q_{2} )$.
Here $\lVert \cdot \rVert_{\ast}$ is the operator norm
from $H^{1/2}(\partial\Omega)$ into $H^{-1/2}(\partial\Omega)$.
\end{theorem}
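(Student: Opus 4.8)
The plan is to recover the Fourier transform of $\widetilde q$ at each frequency from the data $\Lambda_1-\Lambda_2$ by inserting complex geometrical optics (CGO) solutions into an integral identity, and then to reconstruct $\widetilde q$ in $H^{-s}$ by separating a low-frequency ball $\{|\xi|\le\rho\}$, handled by this reconstruction, from a high-frequency tail $\{|\xi|>\rho\}$, handled by the a priori bound $\lVert q_l\rVert_{H^{s}(\Omega)}\le M$. The starting point is Alessandrini's identity: if $u_l$ solves $(\Delta+k^2q_l)u_l=0$ in $\Omega$ with $u_l|_{\partial\Omega}=f_l$, then
\[ \langle(\Lambda_1-\Lambda_2)f_1,f_2\rangle=-k^2\int_\Omega\widetilde q\,u_1u_2\,dx, \]
so that suitable choices of $f_1,f_2$ convert the boundary measurement into a bulk integral of $\widetilde q$.

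For fixed $\xi\in\mathbb R^n$ I would build CGO solutions $u_l=e^{i\zeta_l\cdot x}(1+\psi_l)$ with $\zeta_1+\zeta_2=\xi$ and $\zeta_l\cdot\zeta_l=k^2$; writing $\zeta_l=\eta_l+i\kappa_l$ with $|\kappa_l|=\tau$, such frequencies exist precisely when $|\xi|\le2\sqrt{k^2+\tau^2}$, and constructing them uses $n\ge3$ to select mutually orthogonal directions. A Faddeev-type estimate gives $\lVert\psi_l\rVert\lesssim k^2M/\tau$ once $\tau$ lies above a threshold of order $k^2M$; the hypothesis $k^2\ge1/(C_1M)$ is what makes this threshold exceed the absolute constant required for the construction. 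Substituting into Alessandrini's identity and separating the principal part yields the Born-type formula
\[ \widehat{\widetilde q}(-\xi)=-\frac1{k^2}\langle(\Lambda_1-\Lambda_2)f_1,f_2\rangle-\int_\Omega\widetilde q\,e^{i\xi\cdot x}(\psi_1+\psi_2+\psi_1\psi_2)\,dx. \]
Because $|e^{i\zeta_l\cdot x}|\le e^{C\tau}$ on $\Omega$ and differentiation contributes only polynomial factors, the boundary term is bounded by $Ck^{-2}e^{C\tau}\lVert\Lambda_1-\Lambda_2\rVert_{\ast}$.

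The crucial point is the remainder integral. Instead of forcing it to be pointwise small --- which would require $\tau\gg k^2$ and an uncontrolled exponential --- I would estimate its contribution to $\lVert\widetilde q\rVert_{H^{-s}}$ over $\{|\xi|\le\rho\}$ and bound it by $C(k^2M/\tau)\lVert\widetilde q\rVert_{H^{-s}(\mathbb R^n)}$; taking $\tau$ of order $k^2$ then makes the prefactor at most $1/2$, so this term may be absorbed into the left-hand side. This absorption is exactly what pins $\tau$ at the scale $k^2$ and produces the factor $\exp(Ck^2)$ in the first term of \eqref{iest}. The high-frequency tail is controlled by $|\widehat{\widetilde q}(\xi)|\le\lVert\widetilde q\rVert_{L^1}\le CM$ together with the integrability $\int_{|\xi|>\rho}(1+|\xi|^2)^{-s}\,d\xi\le C\rho^{-(2s-n)}$, which is where $s>n/2$ enters (the extra margin $s>n/2+1$ absorbs the derivative losses incurred by the polynomial factors above); this produces a tail contribution that is a negative power of $\rho$, matching the exponent $-(2s-n)$ in \eqref{iest}.

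Finally I would optimize, keeping $\rho$ tied to $\tau$ through the solvability constraint $\rho\lesssim\sqrt{k^2+\tau^2}$, so that, after the data term is balanced against the tail, the largest frequency one can reach is of order $k^2+\log(1/\lVert\Lambda_1-\Lambda_2\rVert_{\ast})$ (the hypothesis $\lVert\Lambda_1-\Lambda_2\rVert_{\ast}\le1/e$ guarantees $\log(1/\lVert\Lambda_1-\Lambda_2\rVert_{\ast})\ge1$, so the second term in \eqref{iest} is meaningful). For data that are not too small relative to $k$ the minimizing $\tau$ sits at its threshold $\sim k^2$, which reproduces the Lipschitz-type first term; for very small data the balance point grows, the tail then governs the estimate, and one obtains the second term $C(k^2+\log(1/\lVert\Lambda_1-\Lambda_2\rVert_{\ast}))^{-(2s-n)}$. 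Combining the two regimes gives \eqref{iest}. I expect the main obstacle to be precisely the uniform-in-$\xi$ control of the remainder integral: one must show that its $H^{-s}$ contribution is genuinely governed by $\lVert\widetilde q\rVert_{H^{-s}(\mathbb R^n)}$ times a small factor, so that the absorption step is legitimate. This is what allows $\tau$ to be kept at the minimal scale $k^2$ and is the mechanism behind the increasing stability.
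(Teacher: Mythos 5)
Your proposal is correct and follows essentially the same route as the paper: Alessandrini's identity fed with CGO solutions whose large parameter sits at the threshold $\sim k^{2}M$ (with $k^{2}\geq 1/(C_1M)$ ensuring this exceeds the absolute constant needed for the construction), absorption of the remainder's $H^{-s}$ contribution into the left-hand side via a cutoff $\chi$ and duality, a high-frequency tail controlled by the a priori $H^{s}$ bound, and a final two-regime balance between $e^{CT}\lVert\Lambda_{1}-\Lambda_{2}\rVert_{\ast}$ and $T^{-(2s-n)}$ producing the Lipschitz and logarithmic terms. The only cosmetic differences are your Helmholtz-adapted phases $\zeta\cdot\zeta=k^{2}$ versus the paper's harmonic phases $\xi\cdot\xi=0$ (treating $k^{2}q$ entirely as the perturbation), and your single fixed $\tau$ with a two-region frequency split versus the paper's three-region split using $\lvert\zeta\rvert=r$ on the intermediate range.
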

\begin{remark}\label{rem1}

\smallskip\noindent {\rm 1.} The estimate \eqref{iest} consists two
parts -- Lipschitz and logarithmic estimates. As $k$ increases, the
logarithmic part decreases and the Lipschitz part becomes dominated.
In other words, the ill-posedness is alleviated when $k$ is large.

\medskip\noindent {\rm 2.} We would like to remark on the constant
$C\exp(Ck^2)/k^2$ appearing in the Lipschitz part of \eqref{iest}.
$1/k^2$ comes from $k^2q$ in the equation, which appears naturally,
while, $\exp(Ck^2)$ is due to the fact that we use the complex
geometrical optics solutions in the proof. Even so, we expect that the
there is an exponential growth of the constant with frequency since we do not assume
any geometrical restriction on $q(x)$ other than regularity. For the wave equation it has been shown by Burq for the obstacle problem {\rm\cite{B}} that the local energy decay is log-slow and this is due to the presence of trapped rays. Notice that in our case we can have trapped rays. For the case of simple sound speeds we expect that there is no exponential increase in the constant. In {\rm\cite{StU}} a H\"older stability estimate was obtained for the hyperbolic DN map for generic simple metrics. For very general metrics there is not known modulus of continuity for the hyperbolic DN map, see {\rm\cite{AKKLT}} for convergence results.

However, in practice, $k$ is fixed and so is
the constant. Therefore, one should expect to obtain a better
resolution of $q$ from boundary measurements when the chosen $k$ is
large.

\medskip\noindent {\rm 3.} Unlike the result in {\rm\cite[Theorem~2.1]{I11}} {\rm (for equation \eqref{eq0})} where
the stability estimates were derived in different ranges of $k$,
estimate \eqref{iest} is valid for all range of $k$ provided
$k^2\geq 1/(C_1 M)$ .
\end{remark}

The proof of Theorem~\ref{thm} makes use of Alessandrini's arguments
\cite{A88} and the CGO solutions constructed in \cite{SU}. The main task
is to keep track of how $k$ appears in the proof of the stability estimates.

\section{Complex geometrical optics solutions}
\setcounter{equation}{0}

In this section, we construct CGO solutions to the equation
(\ref{eq:eq}) by using the idea in \cite{SU}. The main point is to
express the dependence of constants on $k$ explicitly. We first
state two easy consequences from the results in \cite{SU}.

\begin{lemma}[{see \cite[Proposition~2.1 and Corollary~2.2]{SU}}]%
\label{lemma:SUprop21alpha}
Let $s \geq 0$ be an integer.
Let $\varepsilon_{0} > 0$.
Let $\xi \in \mathbb{C}^{n}$
satisfy
$\xi \cdot \xi = 0$ and $\lvert \xi \rvert \geq \varepsilon_{0}$.
Then for any $f \in H^{s} ( \Omega )$
there exists $w \in H^{s} ( \Omega )$ such that
$w$ is a solution to
\[
\Delta w + \xi \cdot \nabla w = f \mbox{ in } \Omega
\]
and satisfies the estimate
\[
\lVert w \rVert_{H^{s} ( \Omega )}
\leq \frac{C_{0}}{\lvert \xi \rvert}
\lVert f \rVert_{H^{s} ( \Omega )} ,
\]
where a positive constant $C_{0}$
depends only on $n, s, \varepsilon_{0}$ and $\Omega$.
\end{lemma}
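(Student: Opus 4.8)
The plan is to solve the equation on all of $\mathbb{R}^{n}$ by a Fourier-multiplier (convolution) operator and then restrict to $\Omega$. First I would fix a bounded open set $\Omega'$ with $\overline{\Omega}\subset\Omega'$ and a bounded extension operator $E\colon H^{s}(\Omega)\to H^{s}(\mathbb{R}^{n})$ satisfying $\supp(Ef)\subset\Omega'$ and $\lVert Ef\rVert_{H^{s}(\mathbb{R}^{n})}\le C\lVert f\rVert_{H^{s}(\Omega)}$. Writing $\widetilde{f}=Ef$, it then suffices to construct $w$ on $\mathbb{R}^{n}$ solving $\Delta w+\xi\cdot\nabla w=\widetilde{f}$ together with the bound on $\Omega$, because any such $w$ restricts to a solution of the original equation in $\Omega$.

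Taking the Fourier transform turns the equation into $p(\zeta)\,\widehat{w}(\zeta)=\widehat{\widetilde{f}}(\zeta)$ with symbol $p(\zeta)=-\lvert\zeta\rvert^{2}+i\,\xi\cdot\zeta$. Decomposing $\xi=\xi_{1}+i\xi_{2}$ with $\xi_{1},\xi_{2}\in\mathbb{R}^{n}$, the identity $\xi\cdot\xi=0$ forces $\lvert\xi_{1}\rvert=\lvert\xi_{2}\rvert=\lvert\xi\rvert/\sqrt{2}$ and $\xi_{1}\perp\xi_{2}$. In orthonormal coordinates aligned with $\xi_{1},\xi_{2}$ one computes $p(\zeta)=-\lvert\zeta\rvert^{2}-r\zeta_{2}+i\,r\zeta_{1}$ with $r=\lvert\xi\rvert/\sqrt{2}$, so the characteristic set $\{p=0\}$ is the codimension-two sphere $\{\zeta_{1}=0,\ \lvert\zeta\rvert^{2}+r\zeta_{2}=0\}$. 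The natural candidate is $w=\mathcal{F}^{-1}(\widehat{\widetilde{f}}/p)$, i.e. convolution with the Faddeev-type Green's function $g_{\xi}=\mathcal{F}^{-1}(1/p)$.

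The crux is the weighted $L^{2}$ estimate of Sylvester--Uhlmann \cite{SU}: for $-1<\delta<0$ there is $C=C(\delta,n)$ such that $\lVert g_{\xi}*h\rVert_{L^{2}_{\delta}(\mathbb{R}^{n})}\le (C/\lvert\xi\rvert)\,\lVert h\rVert_{L^{2}_{\delta+1}(\mathbb{R}^{n})}$, where $L^{2}_{\tau}$ carries the weight $(1+\lvert x\rvert^{2})^{\tau/2}$. This is precisely where the gain $1/\lvert\xi\rvert$ is produced, and establishing it is the main obstacle. On the Fourier side the weights become Bessel-potential operators $\langle D_{\zeta}\rangle^{\tau}$, so the claim reduces to the boundedness of multiplication by $1/p$, with operator norm $\sim 1/\lvert\xi\rvert$, between the associated spaces; the one extra derivative (from $\delta+1$ to $\delta$) compensates the singularity of $1/p$ on $\{p=0\}$. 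The mechanism is the transversality $\partial_{\zeta_{1}}\mathrm{Im}\,p=r$ together with $\lvert\nabla\real\,p\rvert=r$ on the sphere, which force $\lvert p(\zeta)\rvert\gtrsim\lvert\xi\rvert\,\mathrm{dist}(\zeta,\{p=0\})$ near the characteristic set; integrating the resulting one-dimensional kernel in the $\zeta_{1}$ variable produces the factor $1/r\sim1/\lvert\xi\rvert$. The hypothesis $\lvert\xi\rvert\ge\varepsilon_{0}$ keeps all constants uniform.

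Finally I would upgrade from $L^{2}$ to $H^{s}$ and pass back to $\Omega$. Since $\Delta+\xi\cdot\nabla$ has constant coefficients, the convolution operator commutes with $\partial^{\alpha}$, so $\partial^{\alpha}w=g_{\xi}*\partial^{\alpha}\widetilde{f}$ for every multi-index $\alpha$ with $\lvert\alpha\rvert\le s$. Taking $\delta=-1/2$, the weight $(1+\lvert x\rvert^{2})^{\delta/2}$ is bounded below on the bounded set $\Omega$ while $(1+\lvert x\rvert^{2})^{(\delta+1)/2}$ is bounded above on the fixed compact set $\overline{\Omega'}\supset\supp\widetilde{f}$, so
\[
\lVert\partial^{\alpha}w\rVert_{L^{2}(\Omega)}
\le C\lVert\partial^{\alpha}w\rVert_{L^{2}_{\delta}}
\le \frac{C}{\lvert\xi\rvert}\lVert\partial^{\alpha}\widetilde{f}\rVert_{L^{2}_{\delta+1}}
\le \frac{C}{\lvert\xi\rvert}\lVert\partial^{\alpha}\widetilde{f}\rVert_{L^{2}(\mathbb{R}^{n})} .
\]
Summing over $\lvert\alpha\rvert\le s$ and invoking $\lVert\widetilde{f}\rVert_{H^{s}(\mathbb{R}^{n})}\le C\lVert f\rVert_{H^{s}(\Omega)}$ yields $\lVert w\rVert_{H^{s}(\Omega)}\le(C_{0}/\lvert\xi\rvert)\lVert f\rVert_{H^{s}(\Omega)}$ with $C_{0}$ depending only on $n$, $s$, $\varepsilon_{0}$ and $\Omega$, as required.
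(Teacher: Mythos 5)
Your proposal is correct and takes essentially the same route as the paper, which offers no independent proof but quotes the lemma directly from \cite{SU}: your Fourier-multiplier construction via the Faddeev-type Green's function $g_{\xi}=\mathcal{F}^{-1}(1/p)$ and the weighted estimate $\lVert g_{\xi}*h\rVert_{L^{2}_{\delta}}\leq (C/\lvert\xi\rvert)\lVert h\rVert_{L^{2}_{\delta+1}}$ is precisely the Sylvester--Uhlmann argument behind their Proposition~2.1 and Corollary~2.2. Your remaining steps (compactly supported extension, commuting $\partial^{\alpha}$ with the convolution, and restricting to $\Omega$ with $\delta=-1/2$ so both weights are comparable to $1$ on the relevant compact sets) are the standard bookkeeping by which the $H^{s}(\Omega)$ statement with $C_{0}=C_{0}(n,s,\varepsilon_{0},\Omega)$ follows from the weighted $L^{2}$ result.
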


By using this lemma, we can obtain a solution to the equation
\begin{equation}\label{eq:SUtheorem23alphaeq}
\Delta \psi + \xi \cdot \nabla \psi + g \psi = f
\end{equation}
satisfying some decaying property as in the following lemma.
\begin{lemma}[{\cite[Theorem~2.3 and Corollary~2.4]{SU}}]%
\label{lemma:SUtheorem23alpha}
Let $s > n/2$ be an integer.
Let $\varepsilon_{0} > 0$.
Let $\xi \in \mathbb{C}^{n}$ satisfy
$\xi \cdot \xi = 0$ and $\lvert \xi \rvert \geq \varepsilon_{0}$.
Let $f, g \in H^{s} ( \Omega )$.
Then there exists $C_{1} > 0$ depending only on
$n, s, \varepsilon_{0}$ and $\Omega$
such that if
\[
\lvert \xi \rvert
\geq C_{1} \lVert g \rVert_{H^{s} ( \Omega )}
\]
then there exists a solution $\psi \in H^{s} ( \Omega )$
to the equation
{\rm (\ref{eq:SUtheorem23alphaeq})}
satisfying the estimate
\[
\lVert \psi \rVert_{H^{s} ( \Omega )}
\leq \frac{2 C_{0}}{\lvert \xi \rvert}
\lVert f \rVert_{H^{s} ( \Omega )} ,
\]
where $C_{0}$ is the positive constant in
Lemma~\ref{lemma:SUprop21alpha}.
\end{lemma}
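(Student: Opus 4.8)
The plan is to treat the zeroth-order term $g\psi$ in \eqref{eq:SUtheorem23alphaeq} as a perturbation and to solve the equation by a Neumann series built on the solution operator supplied by Lemma~\ref{lemma:SUprop21alpha}. First I would record that the construction in Lemma~\ref{lemma:SUprop21alpha} is linear in the right-hand side, so it furnishes a bounded \emph{linear} operator $T : H^{s}(\Omega) \to H^{s}(\Omega)$ assigning to each $h$ a solution $w = Th$ of $\Delta w + \xi \cdot \nabla w = h$, with operator norm at most $C_{0}/\lvert \xi \rvert$, where $C_{0} = C_{0}(n,s,\varepsilon_{0},\Omega)$. Rewriting \eqref{eq:SUtheorem23alphaeq} as $\Delta \psi + \xi \cdot \nabla \psi = f - g\psi$, finding a solution is then equivalent to solving
\[
(I + T M_{g})\psi = Tf ,
\]
where $M_{g}$ denotes multiplication by $g$.

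Second, I would invoke that, since $s > n/2$, the space $H^{s}(\Omega)$ is a Banach algebra under pointwise multiplication: there is a constant $C_{s} = C_{s}(n,s,\Omega)$ with $\lVert g\psi \rVert_{H^{s}(\Omega)} \leq C_{s} \lVert g \rVert_{H^{s}(\Omega)} \lVert \psi \rVert_{H^{s}(\Omega)}$. Hence $M_{g}$ maps $H^{s}(\Omega)$ into itself with norm at most $C_{s}\lVert g \rVert_{H^{s}(\Omega)}$, and composing with $T$ gives
\[
\lVert T M_{g} \rVert_{H^{s}(\Omega) \to H^{s}(\Omega)}
\leq \frac{C_{0} C_{s}}{\lvert \xi \rvert} \lVert g \rVert_{H^{s}(\Omega)} .
\]

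Third, I would set $C_{1} := 2 C_{0} C_{s}$, which depends only on $n,s,\varepsilon_{0},\Omega$ as required. Under the hypothesis $\lvert \xi \rvert \geq C_{1} \lVert g \rVert_{H^{s}(\Omega)}$ this forces $\lVert T M_{g} \rVert \leq 1/2 < 1$, so $I + T M_{g}$ is invertible on $H^{s}(\Omega)$ by the Neumann series, with $\lVert (I + T M_{g})^{-1} \rVert \leq 2$. Defining $\psi := (I + T M_{g})^{-1} Tf$ produces a solution of \eqref{eq:SUtheorem23alphaeq} satisfying
\[
\lVert \psi \rVert_{H^{s}(\Omega)}
\leq 2 \lVert Tf \rVert_{H^{s}(\Omega)}
\leq \frac{2 C_{0}}{\lvert \xi \rvert} \lVert f \rVert_{H^{s}(\Omega)} ,
\]
which is exactly the claimed estimate. (Equivalently, one may present this as a contraction-mapping argument for $\Phi(\psi) = T(f - g\psi)$, whose Lipschitz constant is $\lVert T M_{g}\rVert \leq 1/2$.)

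The argument is essentially soft once these pieces are assembled; the only step requiring genuine care — and the single place where the hypothesis $s > n/2$ is actually used — is the multiplicative estimate $\lVert g\psi \rVert_{H^{s}} \lesssim \lVert g \rVert_{H^{s}} \lVert \psi \rVert_{H^{s}}$, which makes $M_{g}$ bounded on $H^{s}(\Omega)$ with the stated norm. I expect this to be the main (and essentially the only) obstacle, and it is precisely what makes the smallness condition $\lvert \xi \rvert \geq C_{1}\lVert g \rVert_{H^{s}}$ the natural threshold for the Neumann-series step to close.
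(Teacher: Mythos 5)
Your argument is correct, and it is essentially the proof in the cited reference: the paper itself offers no proof of this lemma --- it is imported verbatim from \cite{SU} (Theorem~2.3 and Corollary~2.4) --- and the argument there is exactly the perturbation scheme you describe, iterating the solution operator of $\Delta+\xi\cdot\nabla$ against the zeroth-order term $g\psi$. The one point to be careful about is your first step: Lemma~\ref{lemma:SUprop21alpha} as stated asserts only the \emph{existence} of a solution $w$ with the bound $\lVert w\rVert_{H^{s}(\Omega)}\leq C_{0}\lvert\xi\rvert^{-1}\lVert h\rVert_{H^{s}(\Omega)}$, not that the solution can be chosen linearly in the datum; linearity is genuinely needed in both of your formulations (to invert $I+TM_{g}$ by a Neumann series, and to see that $\Phi(\psi)=T(f-g\psi)$ is a contraction, since both require $Th_{1}-Th_{2}=T(h_{1}-h_{2})$). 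You cover this by appealing to the construction --- in \cite{SU} the solution is indeed produced by an explicit Fourier-multiplier operator, hence linear --- which is legitimate, but if you want a proof using only the lemma as stated, build the series term by term: take $w_{0}$ solving $\Delta w_{0}+\xi\cdot\nabla w_{0}=f$ and, inductively, $w_{j+1}$ solving $\Delta w_{j+1}+\xi\cdot\nabla w_{j+1}=-gw_{j}$, each by a fresh application of Lemma~\ref{lemma:SUprop21alpha}. The algebra property of $H^{s}(\Omega)$ for $s>n/2$ and the hypothesis $\lvert\xi\rvert\geq C_{1}\lVert g\rVert_{H^{s}(\Omega)}$ with $C_{1}=2C_{0}C_{s}$ then give $\lVert w_{j}\rVert_{H^{s}(\Omega)}\leq 2^{-j}C_{0}\lvert\xi\rvert^{-1}\lVert f\rVert_{H^{s}(\Omega)}$, so $\psi=\sum_{j\geq 0}w_{j}$ converges in $H^{s}(\Omega)$, solves \eqref{eq:SUtheorem23alphaeq}, and satisfies the stated bound $2C_{0}\lvert\xi\rvert^{-1}\lVert f\rVert_{H^{s}(\Omega)}$. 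Apart from this, your choices ($C_{s}$ from the algebra property, $C_{1}=2C_{0}C_{s}$, the factor $2$ from the geometric series) are exactly right, as is your observation that the algebra inequality is the only place where $s>n/2$ enters.
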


The needed CGO solutions are constructed as follows.
\begin{proposition}\label{prop:CGO}
Let $s > n/2$ be an integer.
Let $\varepsilon_{0} > 0$.
Let $\xi \in \mathbb{C}^{n}$ satisfy
$\xi \cdot \xi = 0$ and $\lvert \xi \rvert \geq \varepsilon_{0}$.
Define the constants $C_{0}$ and $C_{1}$ as in
Lemma~\ref{lemma:SUtheorem23alpha}.
Then if
\[
\lvert \xi \rvert \geq C_{1} k^{2} \lVert q \rVert_{H^{s} ( \Omega
)}
\]
then there exists a solution $u$ to the equation {\rm (\ref{eq:eq})}
with the form of
\begin{equation}\label{eq:CGOform}
u(x) = \exp \left( \frac{\xi}{2} \cdot x \right)
\bigl( 1 + \psi (x) \bigr) ,
\end{equation}
where $\psi$ has the estimate
\[
\lVert \psi \rVert_{H^{s} ( \Omega )} \leq \frac{2 C_{0}
k^{2}}{\lvert \xi \rvert} \lVert q \rVert_{H^{s} ( \Omega )} .
\]
\end{proposition}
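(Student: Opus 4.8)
The plan is to substitute the desired ansatz \eqref{eq:CGOform} directly into \eqref{eq:eq} and reduce the construction to a single application of Lemma~\ref{lemma:SUtheorem23alpha}. Write $u(x) = e^{\xi \cdot x / 2} \bigl( 1 + \psi(x) \bigr)$ and set $v(x) = e^{\xi \cdot x / 2}$. First I would record the two elementary identities $\nabla v = (\xi/2) v$ and $\Delta v = (\xi \cdot \xi / 4) v$; the hypothesis $\xi \cdot \xi = 0$ then forces $\Delta v = 0$, so that $v$ is harmonic. This is the one structural point that makes the whole construction work, and the reason the phase $\xi/2$ and the null condition $\xi\cdot\xi=0$ are chosen as they are.

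Next I would expand $\Delta u$ by the product rule. Because $\Delta v = 0$, the only surviving terms are the cross term $2 \nabla v \cdot \nabla \psi = ( \xi \cdot \nabla \psi ) v$ and the term $v \Delta \psi$, so that $\Delta u = v ( \Delta \psi + \xi \cdot \nabla \psi )$. Substituting into $( \Delta + k^2 q ) u = 0$ and dividing by the nonvanishing factor $v$ yields
\[
\Delta \psi + \xi \cdot \nabla \psi + k^2 q \psi = - k^2 q .
\]
This is precisely equation \eqref{eq:SUtheorem23alphaeq} with the choices $g = k^2 q$ and $f = - k^2 q$, both of which lie in $H^s ( \Omega )$ since $q \in H^s ( \Omega )$.

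Finally I would check that the hypotheses of Lemma~\ref{lemma:SUtheorem23alpha} hold and read off the estimate. The size condition on $\xi$ in that lemma reads $\lvert \xi \rvert \geq C_1 \lVert g \rVert_{H^s ( \Omega )} = C_1 k^2 \lVert q \rVert_{H^s ( \Omega )}$, which is exactly the standing assumption of the Proposition; here it matters that $C_1$ be the very constant produced by the lemma, as the statement stipulates. The lemma then furnishes a solution $\psi \in H^s ( \Omega )$ with
\[
\lVert \psi \rVert_{H^s ( \Omega )} \leq \frac{2 C_0}{\lvert \xi \rvert} \lVert f \rVert_{H^s ( \Omega )} = \frac{2 C_0 k^2}{\lvert \xi \rvert} \lVert q \rVert_{H^s ( \Omega )} ,
\]
which is the claimed bound, and the associated $u$ solves \eqref{eq:eq} by construction. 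I do not anticipate a genuine obstacle: the entire content sits in the harmonicity of $v$ (forced by $\xi \cdot \xi = 0$) and in carefully tracking the factors of $k^2$, so that both $f$ and $g$ carry the weight $k^2$; the existence and the decay rate then come for free from the already-established lemma.
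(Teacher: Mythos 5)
Your proof is correct and is essentially the paper's own argument: substitute the ansatz \eqref{eq:CGOform} into \eqref{eq:eq} to reduce to equation \eqref{eq:SUtheorem23alphaeq} with $g = k^{2} q$ and $f = - k^{2} q$, then apply Lemma~\ref{lemma:SUtheorem23alpha}, whose size hypothesis on $\lvert \xi \rvert$ is exactly the assumption of the Proposition. The only difference is that you spell out the computation (harmonicity of $e^{\xi \cdot x / 2}$ from $\xi \cdot \xi = 0$ and the product-rule expansion) that the paper leaves implicit.
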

\begin{proof}
Substituting (\ref{eq:CGOform}) into (\ref{eq:eq}),
we have
\[
\Delta \psi + \xi \cdot \nabla \psi + k^{2} q \psi = - k^{2} q.
\]
Then by Lemma~\ref{lemma:SUtheorem23alpha},
we obtain this proposition.
\end{proof}

\section{Proof of stability estimate}
\setcounter{equation}{0}

This section is devoted to the proof of Theorem~\ref{thm}. We begin
with Alessandrini's identity.
\begin{proposition}\label{prop:identity}
Let $u_{l}$ be a solution to {\rm (\ref{eq:eq})} with $q = q_{l}$,
then we have
\[
k^{2} \int_{\Omega}
 ( q_{2} - q_{1} ) u_{1} u_{2} \,
d x = \bigl\langle
 ( \Lambda_{1} - \Lambda_{2} ) u_{1} |_{\partial \Omega} , \,
 u_{2} |_{\partial \Omega}
\bigr\rangle.
\]
\end{proposition}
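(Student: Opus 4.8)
The plan is to combine Green's second identity with the self-adjointness of each Dirichlet-to-Neumann map. Write $f_l = u_l|_{\partial\Omega}$, so that by definition $\partial u_l/\partial\nu|_{\partial\Omega} = \Lambda_l f_l$. First I would apply Green's second identity to the pair $u_1, u_2 \in H^1(\Omega)$:
\[
\int_\Omega (u_2 \Delta u_1 - u_1 \Delta u_2)\, dx
= \int_{\partial\Omega} \bigl( u_2\, \partial_\nu u_1 - u_1\, \partial_\nu u_2 \bigr)\, dS,
\]
where the boundary integral is read as the duality pairing between $H^{-1/2}(\partial\Omega)$ and $H^{1/2}(\partial\Omega)$, so that the weak normal derivatives cause no trouble. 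Since $u_l$ solves \eqref{eq:eq} with $q = q_l$, I substitute $\Delta u_l = -k^2 q_l u_l$ into the left-hand side and obtain $k^2\int_\Omega (q_2 - q_1) u_1 u_2\, dx$, which is precisely the left side of the claimed identity. Because $q_l$ is real-valued while the solutions (for instance the CGO solutions of Proposition~\ref{prop:CGO}) are complex, everything here is bilinear and no complex conjugation enters.

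Inserting the definitions of the DN maps, the right-hand boundary term equals $\langle \Lambda_1 f_1, f_2\rangle - \langle \Lambda_2 f_2, f_1\rangle$. The single genuine point is to convert this into $\langle (\Lambda_1 - \Lambda_2) f_1, f_2\rangle$, which requires the symmetry $\langle \Lambda_l g, h\rangle = \langle \Lambda_l h, g\rangle$ of each map. I would establish this by a second application of Green's identity, now to two solutions $v, w$ of the \emph{same} equation $(\Delta + k^2 q_l)\,\cdot = 0$: the volume term
\[
\int_\Omega (w \Delta v - v \Delta w)\, dx
= -k^2 \int_\Omega q_l (w v - v w)\, dx = 0
\]
vanishes identically, forcing the boundary pairing to be symmetric. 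Taking $l = 2$, $g = f_2$, $h = f_1$ replaces $\langle \Lambda_2 f_2, f_1\rangle$ by $\langle \Lambda_2 f_1, f_2\rangle$, after which the two terms combine into $\langle (\Lambda_1 - \Lambda_2) f_1, f_2\rangle$ and the identity follows.

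The only obstacle worth flagging is regularity bookkeeping rather than any real difficulty: each $u_l \in H^1(\Omega)$ has a trace in $H^{1/2}(\partial\Omega)$ and a normal derivative defined only weakly in $H^{-1/2}(\partial\Omega)$, so both uses of Green's identity must be interpreted through the duality pairing rather than as classical surface integrals. With that convention in place the two integrations by parts are routine, and the symmetry of $\Lambda_l$ that drives the final rearrangement is exactly the self-adjointness implicit in its variational definition.
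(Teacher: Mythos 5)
Your proof is correct: the paper in fact states this proposition without any proof, since it is Alessandrini's classical identity from \cite{A88}, and your argument---Green's second identity applied to $u_1,u_2$, together with the symmetry $\langle \Lambda_l g, h\rangle = \langle \Lambda_l h, g\rangle$ obtained from a second application of Green's identity to two solutions of the same equation---is exactly the standard derivation the authors implicitly invoke. Your attention to the bilinear (non-conjugated) pairing and to the weak $H^{-1/2}$ interpretation of the normal derivative is also the right bookkeeping, so there is no gap.
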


Now we would like to estimate the Fourier transform of the
difference of two $q$'s. We denote $\mathcal{F}(f)$ the Fourier
transformation of a function $f$.
\begin{lemma}\label{lemma:Fourierest}
Let $s > (n/2) + 1$ be an integer and $M > 0$. Assume $\lVert q_{l}
\rVert_{H^{s} ( \Omega )} \leq M$, $\supp ( q_{1} - q_{2} ) \subset
\Omega$ and $k^{2} \geq 1 / C_{1} M$, where  $C_{1}$ is the constant
defined in Lemma~\ref{lemma:SUtheorem23alpha} corresponding to
$\varepsilon_{0} = 1$. Let $\widetilde{q}$ be a zero extension of
$q_{1} - q_{2}$ and $a_0\ge C_1$. Suppose that $\chi \in
C_{0}^{\infty} ( \Omega )$ satisfies $\chi \equiv 1$ near $\supp (
q_{1} - q_{2} )$. Then for $r \geq 0$ and $\eta \in \mathbb{R}^{n}$
with $\lvert \eta \rvert = 1$ the following statements hold: if $0
\leq r \leq a_{0} k^{2} M$ then
\begin{equation}\label{eq:lowerFourierestimate}
\lvert \mathcal{F} \widetilde{q} ( r \eta ) \rvert \leq \frac{C
\lVert \chi \rVert_{H^{s} ( \Omega )}}{a_{0}} \lVert \widetilde{q}
\rVert_{H^{-s} ( \mathbb{R}^{n} )} + \frac{C}{k^{2}} \exp ( C a_{0}
k^{2} M ) \lVert \Lambda_{1} - \Lambda_{2} \rVert_{\ast}
\end{equation}
holds; if $r \geq C_{1} k^{2} M$ then
\begin{equation}\label{eq:higherFourierestimate}
\lvert \mathcal{F} \widetilde{q} ( r \eta ) \rvert \leq \frac{C M
k^{2} \lVert \chi \rVert_{H^{s} ( \Omega )}}{r} \lVert \widetilde{q}
\rVert_{H^{-s} ( \mathbb{R}^{n} )} + \frac{C}{k^{2}} \exp ( C r )
\lVert \Lambda_{1} - \Lambda_{2} \rVert_{\ast}
\end{equation}
holds, where $C>0$ depends only on $n, s$ and $\Omega$.
\end{lemma}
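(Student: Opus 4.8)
The plan is to run Alessandrini's duality argument using the complex geometrical optics solutions of Proposition~\ref{prop:CGO}, choosing the two complex frequencies so that the product of the two CGO solutions reproduces the plane wave $e^{-ir\eta\cdot x}$, and then to read off $\mathcal F\widetilde q(r\eta)$ as the leading term. Concretely, given $r\ge 0$, $\eta$ with $|\eta|=1$, and a parameter $\rho\ge\sqrt2\,r$, I would choose $\xi_1,\xi_2\in\mathbb C^n$ with $\xi_j\cdot\xi_j=0$, $|\xi_1|=|\xi_2|=\rho$ and $\xi_1+\xi_2=-2ir\eta$; such vectors exist because $n\ge 3$ leaves a space of dimension $n-1\ge 2$ orthogonal to $\eta$ in which to place the real parts (opposite, so they cancel in the sum) and the transverse imaginary parts, the identity $|\xi_j|^2=2(r^2+|w|^2)$ fixing the condition $\rho\ge\sqrt2\,r$. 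Since $k^2\ge 1/(C_1M)$ forces $C_1k^2M\ge 1=\varepsilon_0$, as soon as $\rho\ge C_1k^2M\ge C_1k^2\|q_l\|_{H^s}$ Proposition~\ref{prop:CGO} supplies solutions $u_l=\exp(\xi_l\cdot x/2)(1+\psi_l)$ of $(\Delta+k^2q_l)u_l=0$ with $\|\psi_l\|_{H^s(\Omega)}\le 2C_0k^2M/\rho$, and by construction $u_1u_2=e^{-ir\eta\cdot x}(1+\psi_1+\psi_2+\psi_1\psi_2)$.

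I would then insert these into Alessandrini's identity (Proposition~\ref{prop:identity}), use $q_2-q_1=-\widetilde q$, separate the constant term, and divide by $k^2$ to get
\[
\mathcal F\widetilde q(r\eta)=-\int_\Omega\widetilde q\,e^{-ir\eta\cdot x}(\psi_1+\psi_2+\psi_1\psi_2)\,dx-\frac1{k^2}\bigl\langle(\Lambda_1-\Lambda_2)u_1|_{\partial\Omega},u_2|_{\partial\Omega}\bigr\rangle.
\]
The two assertions then correspond to two choices of $\rho$. For the low-frequency range $0\le r\le a_0k^2M$ I take $\rho\asymp a_0k^2M$ (legitimate since $a_0\ge C_1$ and $\rho\ge\sqrt2\,r$), so that $\|\psi_l\|_{H^s}\le C/a_0$; for the high-frequency range $r\ge C_1k^2M$ I take $\rho=\sqrt2\,r$, so that $\|\psi_l\|_{H^s}\le\sqrt2\,C_0k^2M/r$.

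For the volume (correction) term I would use $\chi\equiv 1$ on $\supp\widetilde q$ to insert the cutoff and pass to the $H^{-s}$--$H^s$ duality,
\[
\Bigl|\int_\Omega\widetilde q\,e^{-ir\eta\cdot x}(\psi_1+\psi_2+\psi_1\psi_2)\,dx\Bigr|\le\|\widetilde q\|_{H^{-s}(\mathbb R^n)}\,\bigl\|\chi\,e^{-ir\eta\cdot x}(\psi_1+\psi_2+\psi_1\psi_2)\bigr\|_{H^s(\mathbb R^n)},
\]
the task being to bound the second factor by $C\|\chi\|_{H^s}\,k^2M/\rho$ via the algebra property of $H^s$ ($s>n/2$) and the CGO bound $\|\psi_l\|_{H^s}\le 2C_0k^2M/\rho$; substituting the two values of $\rho$ gives the first terms of \eqref{eq:lowerFourierestimate} and \eqref{eq:higherFourierestimate}. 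For the boundary term I would estimate $|\langle(\Lambda_1-\Lambda_2)u_1,u_2\rangle|\le\|\Lambda_1-\Lambda_2\|_\ast\|u_1\|_{H^{1/2}(\partial\Omega)}\|u_2\|_{H^{1/2}(\partial\Omega)}$, control the traces by $\|u_l\|_{H^1(\Omega)}$, and use that $|e^{\xi_l\cdot x/2}|\le e^{|\real\xi_l|\,|x|/2}\le e^{C\rho}$ on $\Omega$ while differentiation costs only powers of $\rho$ absorbed by the exponential, so $\|u_l\|_{H^1(\Omega)}\le Ce^{C\rho}$. After the division by $k^2$ this yields $Ck^{-2}e^{C\rho}\|\Lambda_1-\Lambda_2\|_\ast$, i.e.\ the factor $e^{Ca_0k^2M}$ in the low range and $e^{Cr}$ in the high range.

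The main obstacle is the correction term: I must produce the clean coefficients $\|\chi\|_{H^s}/a_0$ and $Mk^2\|\chi\|_{H^s}/r$ in which $\|\widetilde q\|_{H^{-s}}$ enters linearly (so that it can be absorbed in the proof of Theorem~\ref{thm}), while simultaneously measuring $\widetilde q$ in the weak norm $H^{-s}$ and preventing the oscillatory factor $e^{-ir\eta\cdot x}$ from generating spurious powers of $r$, which would otherwise ruin the decay in the high-frequency estimate. Managing this interplay between the oscillation and the Sobolev norms, together with threading the explicit $k$-dependence through the CGO amplitude bound and the two choices of $\rho$, is the delicate part; the boundary term, by contrast, is routine once the exponential growth of the CGO amplitudes is tracked.
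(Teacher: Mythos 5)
Your plan is, in outline, the paper's own proof: the same null phase vectors (the paper writes $\xi_1=\zeta+i\alpha-ir\eta$, $\xi_2=-\zeta-i\alpha-ir\eta$ with $\alpha\cdot\eta=\alpha\cdot\zeta=\eta\cdot\zeta=0$ and $\lvert\zeta\rvert^2=\lvert\alpha\rvert^2+r^2$, so that $\lvert\xi_l\rvert=\sqrt2\,\lvert\zeta\rvert$, which is exactly your constraint $\rho\ge\sqrt2\,r$), the same identity for $\mathcal F\widetilde q(r\eta)$ from Proposition~\ref{prop:identity} and Proposition~\ref{prop:CGO} (your signs are in fact the careful ones), the same two parameter choices ($\lvert\zeta\rvert=a_0k^2M$ in the low range; $\alpha=0$, $\lvert\zeta\rvert=r$ in the high range, both legitimate under $k^2\ge 1/(C_1M)$), and the same $Ce^{C\lvert\zeta\rvert}$ bound for $\lVert u_l\rVert_{H^{1/2}(\partial\Omega)}$ (the paper goes through $C^0$ bounds on $u_l$ and $\nabla u_l$ using $s-1>n/2$ rather than your $H^1(\Omega)$ trace bound; this difference is immaterial).

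The problem is at the single step you yourself call the main obstacle: there your proposal stops, and the estimate you say you would aim for is false as stated. You place the oscillatory factor \emph{inside} the $H^s$ norm and ask for
\[
\bigl\lVert \chi\, e^{-ir\eta\cdot x}(\psi_1+\psi_2+\psi_1\psi_2)\bigr\rVert_{H^s(\mathbb R^n)}\leq C\,\lVert\chi\rVert_{H^s}\,\frac{k^2M}{\rho}
\]
uniformly in $r$. Since modulation translates the Fourier transform, the sharp generic bound is $\lVert e^{-ir\eta\cdot x}v\rVert_{H^s}\leq C(1+r)^s\lVert v\rVert_{H^s}$ (Peetre's inequality), and nothing in the construction of the CGO remainders $\psi_l$ prevents this loss from being attained. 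In the high range, with $\rho\asymp r$, your coefficient would then be of order $k^2Mr^{s-1}$ instead of $Mk^2/r$, and in the proof of Theorem~\ref{thm} the integral $I_2$ would lose the crucial $a_0^{-2}$ smallness in \eqref{i2}, so $\lVert\widetilde q\rVert_{H^{-s}}^2$ could no longer be absorbed; the low-range gain $1/a_0$ is destroyed in the same way. The paper's proof is arranged so that the exponential never enters the $H^s$ norm of the test function: after inserting $\chi$, it bounds the integral directly by $\lVert q_2-q_1\rVert_{H^{-s}(\Omega)}\,\lVert\chi(\psi_1+\psi_2+\psi_1\psi_2)\rVert_{H^s(\Omega)}$, keeping the unimodular factor $e^{-ir\eta\cdot x}$ on the side of $q_2-q_1$; this is precisely how the clean coefficients $\lVert\chi\rVert_{H^s}/a_0$ and $Mk^2\lVert\chi\rVert_{H^s}/r$ in \eqref{eq:lowerFourierestimate} and \eqref{eq:higherFourierestimate} arise. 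Be aware, however, that your worry is not frivolous: the $H^{-s}$--$H^s$ duality does not by itself license discarding a unimodular factor (pairing $e^{-ir\eta\cdot x}\widetilde q$ against $\chi(\psi_1+\psi_2+\psi_1\psi_2)$ costs the same $(1+r)^s$ in $H^{-s}$), so to complete your argument you must either adopt and justify the paper's form of the pairing or restructure the estimate; flagging the step as delicate does not close the gap.
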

\begin{proof}
In the following proof, the letter $C$ stands for a general constant
depending only on $n, s$ and $\Omega$. By
Proposition~\ref{prop:CGO}, we can construct CGO solutions $u_{l}
(x)$ to the equation (\ref{eq:eq}) with $q = q_{l}$ having the form
of
\[
u_{l} (x) = \exp \left( \frac{\xi_{l}}{2} \cdot x \right)
\bigl( 1 + \psi_{l} (x) \bigr)
\]
for $l = 1, 2$,
and we have
\begin{align}
& \int_{\Omega}
 ( q_{2} - q_{1} )
 \exp \left( \frac{1}{2} ( \xi_{1} + \xi_{2} ) \cdot x \right)
 ( 1 + \psi_{1} + \psi_{2} + \psi_{1} \psi_{2} ) \,
d x \notag \\
& = \frac{1}{k^{2}} \bigl\langle
 ( \Lambda_{1} - \Lambda_{2} ) u_{1} |_{\partial \Omega} , \,
 u_{2} |_{\partial \Omega}
\bigr\rangle \label{eq:identityCGO}
\end{align}
from Proposition~\ref{prop:identity}, where $\psi_{l}$ satisfies
\[
\lVert \psi_{l} \rVert_{H^{s} ( \Omega )} \leq \frac{C k^{2}}{\lvert
\xi_{l} \rvert} \lVert q_{l} \rVert_{H^{s} ( \Omega )}
\]
if $\xi_{l} \in \mathbb{C}^{n}$ satisfies
$\xi_{l} \cdot \xi_{l} = 0$,
$\lvert \xi_{l} \rvert \geq 1$ and
\begin{equation}\label{eq:xil}
\lvert \xi_{l} \rvert \geq C_{1} k^{2} \lVert q_{l} \rVert_{H^{s} (
\Omega )} .
\end{equation}
We remark that
$\lVert \psi_{l} \rVert_{H^{s} ( \Omega )} \leq C$
also holds. Indeed, we have
\[
\lVert \psi_{l} \rVert_{H^{s} ( \Omega )} \leq \frac{C k^{2} \lVert
q_{l} \rVert_{H^{s} ( \Omega )}}
          {\lvert \xi_{l} \rvert}
\leq \frac{C k^{2} \lVert q_{l} \rVert_{H^{s} ( \Omega )}}
          {C_{1} k^{2} \lVert q_{l} \rVert_{H^{s} ( \Omega )}}
= \frac{C}{C_{1}} = C.
\]

Now, let $r \geq 0$, and $\eta \in \mathbb{R}^{n}$ satisfy $\lvert
\eta \rvert = 1$. We assume that $\alpha, \zeta \in \mathbb{R}^{n}$
satisfy
\begin{equation}\label{eq:kzeta}
\alpha \cdot \eta = \alpha \cdot \zeta = \eta \cdot \zeta = 0 \mbox{
and } \lvert \zeta \rvert^{2} = \lvert \alpha \rvert^{2} + r^{2} .
\end{equation}
Define $\xi_{1}$ and $\xi_{2}$ as
\[
\xi_{1} = \zeta + i \alpha - i r \eta\quad \mbox{ and }\quad \xi_{2}
= - \zeta - i \alpha - i r \eta .
\]
Then we have
\[
\xi_{l} \cdot \xi_{l} = 0, \
\lvert \xi_{l} \rvert^{2} = \lvert \zeta
\rvert^{2} + \lvert \alpha \rvert^{2} + r^{2} = 2 \lvert \zeta
\rvert^{2} ~ ( l = 1, 2 ) \mbox{ and } \frac{1}{2} ( \xi_{1} +
\xi_{2} ) = - i r \eta .
\]
Hence by (\ref{eq:identityCGO}), we immediately obtain that
\begin{align}
\mathcal{F} \widetilde{q} ( r \eta )
& = - \int_{\Omega}
 ( q_{2} - q_{1} ) \exp ( - i r \eta \cdot x )
 ( \psi_{1} + \psi_{2} + \psi_{1} \psi_{2} ) \,
d x \notag \\
& \hspace*{3ex} + \frac{1}{k^{2}} \bigl\langle
 ( \Lambda_{1} - \Lambda_{2} ) u_{1} |_{\partial \Omega} , \,
 u_{2} |_{\partial \Omega}
\bigr\rangle \label{eq:Fourierqtilde1}
\end{align}
provided $\lvert \xi_{l} \rvert \geq 1$ and (\ref{eq:xil}) are
satisfied. We first estimate the first term on the right hand side
of \eqref{eq:Fourierqtilde1} by
\begin{align*}
& \left\lvert
 \int_{\Omega}
  ( q_{2} - q_{1} ) \exp ( - i r \eta \cdot x )
  ( \psi_{1} + \psi_{2} + \psi_{1} \psi_{2} ) \,
 d x
\right\rvert \\
& = \left\lvert
 \int_{\Omega}
  ( q_{2} - q_{1} ) \exp ( - i r \eta \cdot x )
  \chi ( \psi_{1} + \psi_{2} + \psi_{1} \psi_{2} ) \,
 d x
\right\rvert \\
& \leq \lVert q_{2} - q_{1} \rVert_{H^{-s} ( \Omega )}
\bigl\lVert
 \chi ( \psi_{1} + \psi_{2} + \psi_{1} \psi_{2} )
\bigr\rVert_{H^{s} ( \Omega )}\\
& \leq \lVert \widetilde{q} \rVert_{H^{-s} ( \mathbb{R}^{n} )}
\lVert \chi \rVert_{H^{s} ( \Omega )}
\bigl(
 \lVert \psi_{1} \rVert_{H^{s} ( \Omega )}
 + \lVert \psi_{2} \rVert_{H^{s} ( \Omega )}
 + \lVert \psi_{1} \rVert_{H^{s} ( \Omega )}
 \lVert \psi_{2} \rVert_{H^{s} ( \Omega )}
\bigr) \\
& \leq \lVert \widetilde{q} \rVert_{H^{-s} ( \mathbb{R}^{n} )}
\lVert \chi \rVert_{H^{s} ( \Omega )}
\left(
 \frac{C k^{2}}{\sqrt{2} \lvert \zeta \rvert}
 + \frac{C k^{2}}{\sqrt{2} \lvert \zeta \rvert}
 + C \frac{C k^{2}}{\sqrt{2} \lvert \zeta \rvert}
\right)
\sum_{l = 1}^{2} \lVert q_{l} \rVert_{H^{s} ( \Omega )} \\
& = \frac{C k^{2} \lVert \chi \rVert_{H^{s} ( \Omega )}}
         {\lvert \zeta \rvert}
\lVert \widetilde{q} \rVert_{H^{-s} ( \mathbb{R}^{n} )}
\sum_{l = 1}^{2} \lVert q_{l} \rVert_{H^{s} ( \Omega )} .
\end{align*}
since
\begin{math}
\chi ( \psi_{1} + \psi_{2} + \psi_{1} \psi_{2} )
\in H_{0}^{s} ( \Omega )
\end{math}
and $s > n/2$.

On the other hand, by taking $R$ large enough such that $\Omega
\subset B_{R} (0)$, we have
\begin{align*}
\bigl\lVert
 u_{l} |_{\partial \Omega}
\bigr\rVert_{L^{2} ( \partial \Omega )}
& \leq \lvert \partial \Omega \rvert^{1/2}
\lVert u_{l} \rVert_{C^{0} ( \Omega )}
\leq \lvert \partial \Omega \rvert^{1/2}
\exp \left( \frac{\lvert \real \xi_{l} \rvert}{2} R \right)
\bigl( 1 + \lVert \psi_{l} \rVert_{L^{\infty} ( \Omega )} \bigr) \\
& \leq C
\exp \left( \frac{\lvert \real \xi_{l} \rvert}{2} R \right)
\bigl( 1 + \lVert \psi_{l} \rVert_{H^{s} ( \Omega )} \bigr) \\
& \leq C \exp \left( \frac{\lvert \real \xi_{l} \rvert}{2} R \right)
( 1 + C ) = C \exp \left(
\frac{\lvert \zeta \rvert}{2} R \right).
\end{align*}
Likewise, we can get that
\begin{align*}
\bigl\lVert
 \nabla u_{l} |_{\partial \Omega}
\bigr\rVert_{L^{2} ( \partial \Omega )}
& = \left\lVert
 \frac{\xi_{l}}{2} u_{l}
 + \exp \left( \frac{\xi_{l}}{2} \cdot \bullet \right)
 ( \nabla \psi_{l} )
\right\rVert_{L^{2} ( \partial \Omega )} \\
& \leq \frac{\sqrt{2} \lvert \zeta \rvert}{2}
C \exp \left( \frac{\lvert \zeta \rvert}{2} R \right)
+ \lvert \partial \Omega \rvert^{1/2}
\exp \left( \frac{\lvert \zeta \rvert}{2} R \right)
\lVert \nabla \psi_{l} \rVert_{C^{0} ( \Omega )} \\
& \leq C \lvert \zeta \rvert
\exp \left( \frac{\lvert \zeta \rvert}{2} R \right)
+ C \exp \left( \frac{\lvert \zeta \rvert}{2} R \right)
\lVert \nabla \psi_{l} \rVert_{H^{s-1} ( \Omega )} \\
& \leq C \lvert \zeta \rvert
\exp \left( \frac{\lvert \zeta \rvert}{2} R \right)
+ C \exp \left( \frac{\lvert \zeta \rvert}{2} R \right)
\lVert \psi_{l} \rVert_{H^{s} ( \Omega )} \\
& \leq C \exp ( C \lvert \zeta \rvert )
\end{align*}
since $s -1 > n/2$. Consequently, we have
\[
\bigl\lVert
 u_{l} |_{\partial \Omega}
\bigr\rVert_{H^{1/2} ( \partial \Omega )}
\leq C \exp ( C \lvert \zeta \rvert ).
\]
Therefore, we can estimate the second term of the right-hand side of
(\ref{eq:Fourierqtilde1}) by
\begin{align*}
\left\lvert \bigl\langle
 ( \Lambda_{1} - \Lambda_{2} ) u_{1} |_{\partial \Omega} , \,
 u_{2} |_{\partial \Omega}
\bigr\rangle \right\rvert
& \leq \lVert \Lambda_{1} - \Lambda_{2} \rVert_{\ast}
\bigl\lVert
 u_{1} |_{\partial \Omega}
\bigr\rVert_{H^{1/2} ( \partial \Omega )}
\bigl\lVert
 u_{2} |_{\partial \Omega}
\bigr\rVert_{H^{1/2} ( \partial \Omega )} \\
& \leq C \exp ( C \lvert \zeta \lvert ) \lVert \Lambda_{1} -
\Lambda_{2} \rVert_{\ast} .
\end{align*}
Summing up, we have shown that for $r > 0$ and for $\eta \in
\mathbb{R}^{n}$ with $\lvert \eta \rvert = 1$ if we take $\alpha$
and $\zeta$ satisfying the conditions (\ref{eq:kzeta}), $\lvert
\zeta \rvert \geq 2^{-1/2}$ and
\begin{equation}\label{eq:zeta}
\lvert \zeta \rvert \geq 2^{-1/2} C_{1} k^{2} \lVert q_{l}
\rVert_{H^{s} ( \Omega )}
\end{equation}
then
\begin{align}
\lvert \mathcal{F} \widetilde{q} ( r \eta ) \rvert & \leq \frac{C
k^{2} \lVert \chi \rVert_{H^{s} ( \Omega )}}
          {\lvert \zeta \rvert}
\lVert \widetilde{q} \rVert_{H^{-s} ( \mathbb{R}^{n} )}
\sum_{l=1}^{2} \lVert q_{l} \rVert_{H^{s} ( \Omega )} \notag \\
& \hspace*{3ex} \mbox{} + \frac{C}{k^{2}} \exp ( C \lvert \zeta
\rvert ) \lVert \Lambda_{1} - \Lambda_{2} \rVert_{\ast}
\label{eq:Fouriersummingup}
\end{align}
holds.

Now assume that $\lVert q_{l} \rVert_{H^{s} ( \Omega )} \leq M$ and
$k^{2} \geq 1 / C_{1} M$. Thus if
\begin{equation}\label{eq:zetaM}
\lvert \zeta \rvert \geq C_{1} k^{2} M
\end{equation}
holds, then (\ref{eq:zeta}) and $\lvert \zeta \rvert \geq 2^{-1/2}$
are satisfied. Pick $a_{0} \geq C_{1}$. We first consider the case
where $0 \leq r \leq a_{0} k^{2} M$. By choosing $\alpha$ and
$\zeta$ satisfying
\[
\alpha \cdot \eta = \alpha \cdot \zeta = \eta \cdot \zeta = 0, \
\lvert \zeta \rvert = a_{0} k^{2} M ( \geq r ) \mbox{ and } \lvert
\alpha \rvert = \sqrt{
 ( a_{0} k^{2} M )^{2} - r^{2}
}
\]
both (\ref{eq:kzeta}) and (\ref{eq:zetaM}) are then satisfied since
$a_{0} \geq C_{1}$. Hence we obtain (\ref{eq:Fouriersummingup}),
that is (\ref{eq:lowerFourierestimate}). On the other hand, when $r
\geq C_{1} k^{2} M$, we can choose $\alpha = 0$, $\eta \cdot \zeta =
0$ and $\lvert \zeta \rvert = r$. Then (\ref{eq:kzeta}),
(\ref{eq:zetaM}) are satisfied and thus (\ref{eq:Fouriersummingup})
holds and consequently (\ref{eq:higherFourierestimate}) is valid.
\end{proof}

Now we prove our main result.

\begin{proof}
As above, $C$ denotes a general constant depending only on $n, s$
and $\Omega$. Written in polar coordinates, we have
\begin{align}\label{i123}
\lVert \widetilde{q} \rVert_{H^{-s} ( \mathbb{R}^{n} )}^{2}
& = C \int_{0}^{\infty} \int_{\lvert \eta \rvert = 1}
 \lvert \mathcal{F} \widetilde{q} ( r \eta ) \rvert^{2}
 ( 1 + r^{2} )^{-s} r^{n-1} \,
d \eta \, d r \notag\\
& = C \biggl(
 \int_{0}^{a_{0} k^{2} M} \int_{\lvert \eta \rvert = 1}
  \lvert \mathcal{F} \widetilde{q} ( r \eta ) \rvert^{2}
  ( 1 + r^{2} )^{-s} r^{n-1} \,
 d \eta \, d r \notag\\
 & \hspace*{7ex} \mbox{}
 + \int_{a_{0} k^{2} M}^{T} \int_{\lvert \eta \rvert = 1}
  \lvert \mathcal{F} \widetilde{q} ( r \eta ) \rvert^{2}
  ( 1 + r^{2} )^{-s} r^{n-1} \,
 d \eta \, d r \notag\\
 & \hspace*{7ex} \mbox{}
 + \int_{T}^{\infty} \int_{\lvert \eta \rvert = 1}
  \lvert \mathcal{F} \widetilde{q} ( r \eta ) \rvert^{2}
  ( 1 + r^{2} )^{-s} r^{n-1} \,
 d \eta \, d r
\biggr) \notag\\
& =: C( I_{1} + I_{2} + I_{3} ),
\end{align}
where $a_{0} \geq C_{1}$ and $T \geq a_{0} k^{2} M$ are parameters
which will be chosen later. Here $C_1$ is the constant given in
Lemma~\ref{lemma:Fourierest}. From now on, we take $k^2\ge
1/(C_1M)$.

Our task now is to estimate each integral separately. We begin with
$I_{3}$. Since
\begin{math}
\lvert \mathcal{F} \widetilde{q} ( r \eta ) \rvert
\leq C \lVert q_{1} - q_{2} \rVert_{L^{2} ( \Omega )}
\end{math},
$q_{1} - q_{2} \in H_{0}^{s} ( \Omega )$, and $s > n/2$, we have
that
\begin{align}\label{i3}
I_{3}
& \leq C \int_{T}^{\infty}
 \lVert q_{1} - q_{2} \rVert_{L^{2} ( \Omega )}^{2}
 ( 1 + r^{2} )^{-s} r^{n-1} \,
d r
\leq C T^{- m}
\lVert q_{1} - q_{2} \rVert_{L^{2} ( \Omega )}^{2} \notag\\
& \leq C T^{- m}
\left(
 \varepsilon \lVert q_{1} - q_{2} \rVert_{H^{-s} ( \Omega )}^{2}
 + \frac{C}{\varepsilon}
 \lVert q_{1} - q_{2} \rVert_{H^{s} ( \Omega )}^{2}
\right) \notag\\
& \leq C T^{- m}
\left(
 \varepsilon \lVert \widetilde{q} \rVert_{H^{-s} ( \mathbb{R}^{n} )}^{2}
 + \frac{M^{2}}{\varepsilon}
\right)
\end{align}
for $\varepsilon > 0$, where $m := 2 s - n$.

On the other hand, by Lemma~\ref{lemma:Fourierest}, we can estimate
\begin{align}\label{i1}
I_{1} & \leq C \int_{0}^{a_{0} k^{2} M}
 ( 1 + r^{2} )^{-s} r^{n-1} \,
d r \notag\\
& \hspace*{5ex} \mbox{} \times
\left[
 \frac{\lVert \chi \rVert_{H^{s} ( \Omega )}^{2}}{a_{0}^{2}}
 \lVert \widetilde{q} \rVert_{H^{-s} ( \mathbb{R}^{n} )}^{2}
 + \frac{\exp ( 2 C a_{0} k^{2} M )}{k^{4}}
 \lVert \Lambda_{1} - \Lambda_{2} \rVert_{\ast}^{2}
\right] \notag\\
& \leq C \int_{0}^{\infty}
 ( 1 + r^{2} )^{-s} r^{n-1} \,
d r
\left[
 \frac{C_{\chi}^{2}}{a_{0}^{2}}
 \lVert \widetilde{q} \rVert_{H^{-s} ( \mathbb{R}^{n} )}^{2}
 + \frac{\exp ( C a_{0} k^{2} M )}{k^{4}}
 \lVert \Lambda_{1} - \Lambda_{2} \rVert_{\ast}^{2}
\right] \notag\\
& = \frac{C C_{\chi}^{2}}{a_{0}^{2}} \lVert \widetilde{q}
\rVert_{H^{-s} ( \mathbb{R}^{n} )}^{2} + \frac{C \exp ( C a_{0}
k^{2} M )}{k^{4}} \lVert \Lambda_{1} - \Lambda_{2} \rVert_{\ast}^{2}
,
\end{align}
where $\chi \in C_{0}^{\infty} ( \Omega )$ satisfies $\chi \equiv 1$
near $\supp ( q_{2} - q_{1} )$ and $C_{\chi} := \lVert \chi
\rVert_{H^{s} ( \Omega )}$. In view of
\begin{align*}
\int_{a_{0} k^{2} M}^{T}
 ( 1 + r^{2} )^{-s} r^{n-3} \,
d r & \leq \int_{a_{0} k^{2} M}^{T}
 r^{- 2 s + n-3} \,
d r
\leq C ( a_{0} k^{2} M )^{- 2 s + n - 2} \\
& \leq C ( a_{0} k^{2} M )^{-2} ( C_{1} k^{2} M )^{- m} \leq
\frac{C}{a_{0}^{2} k^{4} M^{2}}
\end{align*}
and
\begin{align*}
\int_{a_{0} k^{2} M}^{T}
 \exp ( C r ) ( 1 + r^{2} )^{-s} r^{n-1} \,
d r & \leq \exp ( C T ) \int_{a_{0} k^{2} M}^{T}
 ( 1 + r^{2} )^{-s} r^{n-1} \,
d r \\
& \leq \exp ( C T ) \int_{0}^{\infty}
 ( 1 + r^{2} )^{-s} r^{n-1} \,
d r \\
& \leq C \exp ( C T ),
\end{align*}
we have that
\begin{align}\label{i2}
I_{2} & \leq C M^{2} k^{4} \lVert \chi \rVert_{H^{s} ( \Omega )}^{2}
\lVert \widetilde{q} \rVert_{H^{-s} ( \mathbb{R}^{n} )}^{2}
\int_{a_{0} k^{2} M}^{T}
 ( 1 + r^{2} )^{-s} r^{n-3} \,
d r \notag\\
& \hspace*{5ex} \mbox{} + \frac{C}{k^{4}} \lVert \Lambda_{1} -
\Lambda_{2} \rVert_{\ast}^{2} \int_{a_{0} k^{2} M}^{T}
 \exp ( C r ) ( 1 + r^{2} )^{-s} r^{n-1} \,
d r \notag\\
& \leq \frac{C C_{\chi}^{2}}{a_{0}^{2}} \lVert \widetilde{q}
\rVert_{H^{-s} ( \mathbb{R}^{n} )}^{2} + \frac{C}{k^{4}} \exp ( C
T ) \lVert \Lambda_{1} - \Lambda_{2} \rVert_{\ast}^{2}.
\end{align}
Combining \eqref{i123}--\eqref{i2} gives
\begin{align*}
\lVert \widetilde{q} \rVert_{H^{-s} ( \mathbb{R}^{n} )}^{2}
& \leq C ( I_{1} + I_{2} + I_{3} ) \\
& \leq \frac{C C_{\chi}^{2}}{a_{0}^{2}} \lVert \widetilde{q}
\rVert_{H^{-s} ( \mathbb{R}^{n} )}^{2} + \frac{C \exp ( C a_{0}
k^{2} M )}{k^{4}}
\lVert \Lambda_{1} - \Lambda_{2} \rVert_{\ast}^{2} \\
& \hspace*{5ex} \mbox{} + \frac{C C_{\chi}^{2}}{a_{0}^{2}} \lVert
\widetilde{q} \rVert_{H^{-s} ( \mathbb{R}^{n} )}^{2} +
\frac{C}{k^{4}} \exp ( C T )
\lVert \Lambda_{1} - \Lambda_{2} \rVert_{\ast}^{2} \\
& \hspace*{5ex} \mbox{} + C T^{- m}
\left(
 \varepsilon \lVert \widetilde{q} \rVert_{H^{-s} ( \mathbb{R}^{n} )}^{2}
 + \frac{M^{2}}{\varepsilon}
\right) \\
& = \left(
 \frac{C_{2}^{2} C_{\chi}^{2}}{a_{0}^{2}}
 + C_{3} T^{- m} \varepsilon
\right) \lVert \widetilde{q} \rVert_{H^{-s} ( \mathbb{R}^{n} )}^{2} \\
& \hspace*{5ex} \mbox{} + \frac{C}{k^{4}} \bigl(
 \exp ( C a_{0} k^{2} M ) + \exp ( C T )
\bigr) \lVert \Lambda_{1} - \Lambda_{2} \rVert_{\ast}^{2} + \frac{C
M^{2}}{\varepsilon} T^{- m} ,
\end{align*}
where positive constants $C_{2}$ and $C_{3}$ depend only on $n, s$
and $\Omega$.

Now we pick $a_{0}$ and $\varepsilon$ as
\[
a_{0} =
2 C_{2} C_{\chi} \geq C_{1}
\mbox{ and }
\varepsilon = \frac{T^{m}}{4 C_{3}}
\]
(if needed, we take $C_{2}$ large enough). We then obtain that
\begin{align}
\lVert \widetilde{q} \rVert_{H^{-s} ( \mathbb{R}^{n} )}^{2}
& \leq \frac{C}{k^{4}} \left[
 \exp ( 2 C_{2} C C_{\chi} k^{2} M ) + \exp ( C T )
\right] \lVert \Lambda_{1} - \Lambda_{2} \rVert_{\ast}^{2}
+ C T^{- 2 m} M^{2} \notag \\
& = \frac{C}{k^{4}} \exp ( C a k^{2} ) A + C \Phi (T)
\label{eq:keyestimatewithT}
\end{align}
for
\begin{math}
T \geq a_{0} k^{2} M = 2 C_{2} C_{\chi} k^{2} M = a k^{2}
\end{math},
where
\[
\Phi (T) := \frac{1}{k^{4}} \exp ( C_{4} T ) A
+ M^{2} T^{- 2 m} ,
\]
$A := \lVert \Lambda_{1} - \Lambda_{2} \rVert_{\ast}^{2}$, $a := 2
C_{2} C_{\chi} M^{2}$ and $C_{4} > 0$ depends only on $n, s$ and
$\Omega$.

To continue, we consider two cases:
\begin{equation}\label{eq:Lambdasmall}
a k^{2} \leq p \log \frac{1}{A}
\end{equation}
and
\begin{equation}\label{eq:Lambdanotsmall}
a k^{2} \geq p \log \frac{1}{A} ,
\end{equation}
where $p$ will be determined later (see (\ref{eq:p})).

For the first case (\ref{eq:Lambdasmall}), our aim is to show that
there exists $T \geq a k^{2}$ such that
\begin{equation}\label{eq:Lambdasmallaim}
\Phi (T) \leq 2 C_{5} \left( k^{2} + \log \frac{1}{A} \right)^{- 2 m} .
\end{equation}
Substituting (\ref{eq:Lambdasmallaim}) into
(\ref{eq:keyestimatewithT}) clearly implies (\ref{iest}). Now to
derive (\ref{eq:Lambdasmallaim}), it is enough to prove that
\begin{equation}\label{eq:Lambdasmallaim1}
\frac{1}{k^{4}} \exp ( C_{4} T ) A
\leq C_{5} \left( k^{2} + \log \frac{1}{A} \right)^{- 2 m}
\end{equation}
and
\begin{equation}\label{eq:Lambdasmallaim2}
M^{2} T^{- 2 m}
\leq C_{5} \left( k^{2} + \log \frac{1}{A} \right)^{- 2 m} .
\end{equation}
Remark that (\ref{eq:Lambdasmallaim2}) in equivalent to
\[
T \geq C_{5}^{- 1 / 2 m} M^{1 / m}
\left( k^{2} + \log \frac{1}{A} \right) ,
\]
which holds if
\begin{equation}\label{eq:Lambdasmallaim2b}
T \geq C_{5}^{- 1 / 2 m} M^{1/m}
\left( 1 + \frac{p}{a} \right) \log \frac{1}{A}
\end{equation}
because of (\ref{eq:Lambdasmall}). Setting $T = p \log (1/A)$ ($\geq
a k^{2}$ by (\ref{eq:Lambdasmall})), then
(\ref{eq:Lambdasmallaim2b}) holds provided
\begin{equation}\label{eq:Lambdasmallaim2C5}
p \geq C_{5}^{- 1 / 2 m} M^{1/m} \left( 1 + \frac{p}{a} \right) .
\end{equation}
Now we turn to (\ref{eq:Lambdasmallaim1}).
It is clear that (\ref{eq:Lambdasmallaim1}) is equivalent to
\begin{equation}\label{eq:Lambdasmallaim1a}
C_{4} p \log \frac{1}{A}
\leq \log C_{5} + 2 \log k^{2} + \log \frac{1}{A}
- 2 m \log \left( k^{2} + \log \frac{1}{A} \right)
\end{equation}
since $T = p \log ( 1 / A )$.
It follows from (\ref{eq:Lambdasmall}) that
\[
\log \left( k^{2} + \log \frac{1}{A} \right)
\leq \log \left(
 \frac{p}{a} \log \frac{1}{A} + \log \frac{1}{A}
\right)
= \log \left( \frac{p}{a} + 1 \right)
+ \log \log \frac{1}{A} .
\]
Hence (\ref{eq:Lambdasmallaim1a}) is verified if we can show that
\[
C_{4} p \log \frac{1}{A}
\leq \log C_{5} - 2 \log ( M C_{1} )
+ \log \frac{1}{A}
- 2 m \left( \log \left( \frac{p}{a} + 1 \right)
 + \log \log \frac{1}{A}
\right) ,
\]
i.e.
\begin{equation}\label{eq:Lambdasmallaim1b}
( 1 - C_{4} p ) \log \frac{1}{A}
- 2 m \log \log \frac{1}{A}
+ \log C_{5}
- 2 \log ( M C_{1} )
- 2 m \log \left( \frac{p}{a} + 1 \right)
\geq 0
\end{equation}
for $\log ( 1 / A ) \geq 1$. Now we choose
\begin{equation}\label{eq:p}
p = \frac{1}{2 C_{4}} .
\end{equation}
Then (\ref{eq:Lambdasmallaim1b}) becomes
\begin{equation}\label{eq:Lambdasmallaim1c}
\log \frac{1}{A} - 4 m \log \log \frac{1}{A}
+ 2 \log C_{5} - 4 \log ( M C_{1} )
- 4 m \log \left( \frac{p}{a} + 1 \right) \geq 0.
\end{equation}
Notice that
\begin{align*}
\inf_{0 < A \leq 1 / e} \left(
 \log \frac{1}{A} - 4 m \log \log \frac{1}{A}
\right)
& = \inf_{z \geq 1} ( z - 4 m \log z ) \\
& \geq \inf_{z > 0} ( z - 4 m \log z )
= 4 m \log \frac{e}{4 m} .
\end{align*}
Hence if we choose $C_{5}$ such that
\begin{equation}\label{eq:Lambdasmallaim1C5}
C_{5} \geq ( M C_{1} )^{2}
\left( \frac{p}{a} + 1 \right)^{2 m}
\left( \frac{4 m}{e} \right)^{2 m}
\end{equation}
then (\ref{eq:Lambdasmallaim1c}) follows.
Finally, we take
\[
C_{5} := \max \left\{
 C_{1}^{2} \left( \frac{4 m}{e} \right)^{2 m} , \
 p^{- 2 m}
\right\} M^{2} \left( 1 + \frac{p}{a} \right)^{2 m} ,
\]
which depends only on $n, \Omega , s, M$ and $\chi$.
With such choice of $C_{5}$,
the conditions
(\ref{eq:Lambdasmallaim1C5}) and (\ref{eq:Lambdasmallaim2C5}) hold,
and thus estimate (\ref{eq:Lambdasmallaim}) is satisfied.

Next we consider the second case (\ref{eq:Lambdanotsmall}).
By (\ref{eq:keyestimatewithT})
with $T = a k^{2}$, we get that
\begin{align*}
\lVert \widetilde{q} \rVert_{H^{-s} ( \mathbb{R}^{n} )}^{2}
& \leq \frac{C}{k^{4}} \exp ( C a k^{2} ) A
+ \frac{C}{k^{4}} \exp ( C_{4} a k^{2} ) A
+ C M^{2} ( a k^{2} )^{- 2 m} \\
& \leq \frac{C}{k^{4}} \exp ( C a k^{2} ) A
+ C M^{2} a^{- 2 m} k^{- 4 m} .
\end{align*}
Hence it remains to show that
\[
k^{- 4 m} \leq C_{6} \left( k^{2} + \log \frac{1}{A} \right)^{- 2 m} ,
\]
i.e.
\begin{equation}\label{eq:Lambdanotsmallaim}
k^{2} \geq C_{6}^{- 1 / 2 m}
\left( k^{2} + \log \frac{1}{A} \right) .
\end{equation}
Since
\[
k^{2} + \log \frac{1}{A}
\leq \left( 1 + \frac{a}{p} \right) k^{2}
\]
by (\ref{eq:Lambdanotsmall}), we have (\ref{eq:Lambdanotsmallaim})
if we take $C_{6}$ large enough so that
\[
C_{6} \geq \left( 1 + \frac{a}{p} \right)^{2 m} .
\]
The proof is completed.
\end{proof}

\section*{Acknowledgements}
Nagayasu was partially supported
by Grant-in-Aid for Young Scientists (B). Uhlmann was partly supported by NSF
and a Visiting Distinguished Rothschild Fellowship at the Isaac Newton Institute.
Wang was partially supported by the National Science Council of
Taiwan. We would also like to thank P. Stefanov for helpful discussions.

\end{document}